\documentclass[11pt]{amsart}
\usepackage{amsmath}
\usepackage{amssymb}
\usepackage{bm}
\usepackage{ esint }
\usepackage{graphicx}
\usepackage{verbatim}
\usepackage{enumitem}
\setlist[enumerate,1]{leftmargin=*}
\setlist[itemize,1]{leftmargin=*}
\usepackage{mathrsfs}
\usepackage[dvipsnames]{color}
\usepackage{todonotes}
\usepackage[pdftex,pdfstartview=FitH,pdfborderstyle={/S/B/W 1},%
colorlinks=true, linkcolor=blue, urlcolor=blue, citecolor=blue,%
pagebackref=true]{hyperref}
\usepackage[nobysame,alphabetic,initials,msc-links]{amsrefs}
\usepackage{comment}
\usepackage{mathtools}

\calclayout 

\newtheorem{lemma}{Lemma}[section]
\newtheorem{theorem}[lemma]{Theorem}
\newtheorem{proposition}[lemma]{Proposition}

\theoremstyle{definition}

\newtheorem{definition}[lemma]{Definition}

\newtheorem{example}[lemma]{Example}

\numberwithin{equation}{section}

\def\geq{\geqslant}
\def\leq{\leqslant}

\def\0{\mathbf{0}}

\def\epsilon{\varepsilon}

\newcommand{\bea}{\begin{eqnarray}}
  \newcommand{\eea}{\end{eqnarray}}
  \newcommand{\beab}{\begin{eqnarray*}}
  \newcommand{\eeab}{\end{eqnarray*}}

  \newcommand{\be}{\begin{equation}}
  \newcommand{\ee}{\end{equation}}

\begin{document}

\title{Slow Entropy of Quasi-Unipotent Actions on
\(S\)-Arithmetic Homogeneous Spaces}

\author{Jasurbek Imomov}

\address{J.I.: Department of Mathematics, National University of Singapore,
119076, Singapore}
\email{e1445244@u.nus.edu}

\date{}

\begin{abstract}
We study metric and topological slow entropy for translations on a
class of finite volume $S$-arithmetic homogeneous spaces with a real
and  $p$-adic factor. We consider translations whose real component
is quasi-unipotent and whose $p$-adic component lies in a compact open
subgroup. At polynomial scales, we show that the metric and topological
slow entropies coincide and are determined by the chain structure of
the real adjoint action. In particular, the non-Archimedean component
contributes no additional polynomial slow entropy. Our result extends
the slow entropy formula of Kanigowski, Vinhage, and Wei \cite{KVW} for
quasi-unipotent flows on real homogeneous spaces to this $S$-arithmetic
setting.
\end{abstract}

\maketitle
\section{Introduction}

Many dynamical systems of interest have zero classical (Kolmogorov--Sinai) entropy while
still exhibiting nontrivial orbit complexity. This is particularly
common in parabolic dynamics, where nearby trajectories may diverge at
polynomial rather than exponential rates. Since classical entropy
records complexity at the exponential scale, it does not distinguish
such slower growth. This motivates the study of slow entropy, which
refines classical entropy by measuring orbit complexity at
subexponential scales.

Katok and Thouvenot \cite{KT} developed slow entropy type invariants
for measure preserving actions of amenable groups by considering
coverings of orbit names by Hamming balls. Their approach allows the
asymptotic growth of these coverings to be measured with respect to a
chosen family of scales. Thus, instead of the exponential family
$e^{\chi N}$ associated with classical entropy, one may consider slower
families such as the polynomial scale $N^\chi$, which can yield
nontrivial invariants for systems with zero classical entropy. They
also used slow entropy to obtain obstructions to smooth realization of
measure preserving actions.

Ferenczi \cite{Ferenczi} developed the related notion of
measure theoretic complexity and showed that its uniform boundedness
(equivalently, vanishing slow entropy at every scale) characterizes
Kronecker systems.

Slow entropy for higher rank actions has also been studied
in \cite{KKR} and \cite{Ho}. Kanigowski \cite{Kanigowski}
computed slow entropy for certain smooth surface flows. Adams \cite{Adams21Gene} established genericity results
and proved the existence of rigid, weakly mixing
transformations with infinite lower slow entropy
for each subexponential rate.
More recently, Banerjee, Kunde, and Wei \cites{BKW,BKW2}
established flexibility results for polynomial slow entropy
of certain measure preserving transformations, while Cheng,
Ospina, Vinhage, and Zhai \cite{COVZ} investigated variational
properties of slow entropy, including the validity and failure
of the variational principle.

In the homogeneous setting, Kanigowski, Vinhage, and Wei \cite{KVW}
computed the polynomial slow entropy of quasi-unipotent flows on
finite volume homogeneous spaces, giving an explicit formula in terms
of the Jordan chain structure of the adjoint action and showing that
the metric and topological slow entropies coincide. This was
subsequently extended by Kanigowski, Kunde, Vinhage, and Wei
\cite{KKVW} to higher rank abelian unipotent actions. They expressed
the polynomial slow entropy in terms of the dimensions of a
decomposition of the ambient Lie algebra induced by the Lie algebra of
the acting unipotent group, and again proved that the metric and
topological slow entropies coincide.

The results of \cite{KVW} and \cite{KKVW} concern actions on real homogeneous
spaces. It is therefore natural to ask how the picture changes in
the $S$-arithmetic setting, where real and non-Archimedean components
occur simultaneously. In particular, one may ask whether a compact
$p$-adic component contributes additional polynomial orbit complexity
or whether the slow entropy continues to be determined by the real
quasi-unipotent dynamics. We address this question for translations
on $S$-arithmetic homogeneous spaces.

\subsection{Slow entropy invariants}

\subsubsection{Topological slow entropy}

Let $(X,d)$ be a metric space, and let $T:X\to X$ be a uniformly
continuous transformation. For $N\geq 1$, define
\[
d_T^N(x,y):=\max_{0\leq n<N}d(T^nx,T^ny),
\]
and the Bowen ball
\[
B_T^N(x,\varepsilon)
:=\{y\in X:d_T^N(x,y)<\varepsilon\}.
\]
For a compact set $K\subset X$, let $N_{T,K}(\varepsilon,N)$
denote the minimal number of Bowen balls $B_T^N(x,\varepsilon)$
required to cover $K$, and let $S_{T,K}(\varepsilon,N)$
denote the maximal number of pairwise disjoint Bowen balls
$B_T^N(x,\varepsilon)$ with centers in $K$.

For every compact $K\subset X$, $\varepsilon>0$, and $N\geq 1$,
\[
N_{T,K}(2\varepsilon,N)
\leq S_{T,K}(\varepsilon,N)
\leq N_{T,K}\left(\frac{\varepsilon}{2},N\right).
\]

\begin{definition}[Topological slow entropy]
The topological slow entropy of $T$ with respect to the polynomial
scale $a_\chi(N)=N^\chi$ is defined by
\[
h_{\mathrm{top},a_\chi}(T)
:=
\sup_{\substack{K\subset X\\ K\text{ compact}}}
\lim_{\varepsilon\to 0}\left(\sup\left\{
\chi:
\limsup_{N\to\infty}
\frac{N_{T,K}(\varepsilon,N)}{N^\chi}>0
\right\}\right).
\]
Equivalently, $N_{T,K}(\varepsilon,N)$ may be replaced by
$S_{T,K}(\varepsilon,N)$ in the definition.
\end{definition}

\subsubsection{Metric slow entropy}

Let $(X,\mathcal B,\mu,T)$ be a measure preserving system and let
$\xi$ be a finite measurable partition of $X$. For $N\geq 1$, define
the normalized Hamming distance
\[
d_\xi^N(x,y)
:=
\frac{1}{N}
\left|
\left\{
0\leq n<N:
T^n x \text{ and } T^n y
\text{ lie in different atoms of }\xi
\right\}
\right|.
\]

Let $S_\xi(T,N,\varepsilon)$ denote the minimal number of
$d_\xi^N$-balls of radius $\varepsilon$ whose union has measure
greater than $1-\varepsilon$.
\begin{definition}[Metric slow entropy]
The metric slow entropy of $T$ with respect to the polynomial
scale $a_\chi(N)=N^\chi$ is defined by
\[
h_{\mu,a_\chi}(T)
:=
\sup_{\xi}
\lim_{\varepsilon\to0}
\left(\sup\left\{
\chi:
\limsup_{N\to\infty}
\frac{S_\xi(T,N,\varepsilon)}{N^\chi}>0
\right\}\right),
\]
where the supremum is taken over all finite measurable partitions
$\xi$ of $X$.
\end{definition}

\subsection{Statement of the main theorem.}
Recall that for a connected Lie group $G$ with Lie algebra
$\mathfrak g:=\operatorname{Lie}(G)$, an element $U\in\mathfrak g$
is called $\operatorname{ad}$-\emph{quasi-unipotent} if every
eigenvalue of $\operatorname{ad}U$ is purely imaginary.
In this case, $u:=\exp(U)$ is called \emph{quasi-unipotent}.
Then, the real Jordan
normal form of $\operatorname{ad}U$ is
\[
\operatorname{diag}\left(
J_{m_1},\ldots,J_{m_l},
J_{m_{l+1}}^{\alpha_1},\ldots,
J_{m_n}^{\alpha_{n-l}}
\right),
\]
where $J_m$ is the $m\times m$ nilpotent Jordan block, and $J_m^\alpha$ is the $2m\times 2m$ matrix given by
\[
J_m^\alpha=
\begin{pmatrix}
Q_\alpha&I_2&&\\
&Q_\alpha&\ddots&\\
&&\ddots&I_2\\
&&&Q_\alpha
\end{pmatrix},
\qquad
Q_\alpha=
\begin{pmatrix}
0&\alpha\\
-\alpha&0
\end{pmatrix},
\quad \alpha\ne0.
\]

The tuple
\[
(m_1,\ldots,m_l;m_{l+1},\ldots,m_n)
\]
is called the \emph{chain structure} of $U$. 

Let $\mathbf G\leq \mathrm{SL}_d$ be an algebraic group defined over
$\mathbb Q$ such that $\mathbf G(\mathbb R)$ is connected, and let $p$ be a prime.

Let
\[
\mathbb Z[1/p]
:=
\left\{\frac{a}{p^n}:a\in\mathbb Z,\ n\geq0\right\},
\qquad
\Gamma_S:=\Delta\bigl(\mathbf G(\mathbb Z[1/p])\bigr),
\]
where $\Delta(\gamma):=(\gamma,\gamma)$. We assume that $\Delta\bigl(\mathbf G(\mathbb Z[1/p])\bigr)$ is a lattice in $\mathbf G(\mathbb R)\times\mathbf G(\mathbb Q_p)$ and that
$\mathbf G(\mathbb Z[1/p])$ is dense in $\mathbf G(\mathbb Q_p)$. 

The lattice assumption is
equivalent to $\mathbf G(\mathbb Z)$ being a lattice in $\mathbf G(\mathbb R)$; see
\cite{Gui}*{Theorem~2.6}. If \(\mathbf G\) is Zariski connected, then the
Borel Harish-Chandra theorem implies that
\(\mathbf G(\mathbb Z)\) is a lattice in $\mathbf G(\mathbb R)$
if and only if \(\mathbf G\) has no nontrivial
\(\mathbb Q\)-rational algebraic characters.
In particular, this holds whenever
\(\mathbf G\) is semisimple. 

The density assumption is a consequence of
strong approximation whenever $\mathbf G$ is semisimple and simply
connected and every $\mathbb Q$-simple factor has noncompact real
points; see \cite{PR}*{Theorem~7.12}. In particular, both assumptions
hold for $\mathbf G=\mathrm{SL}_d$, $d\geq2$.

\medskip

Consider the
$S$-arithmetic homogeneous space 
\[
X_S:=(\mathbf G(\mathbb R)\times\mathbf G(\mathbb Q_p))/\Delta\bigl(\mathbf G(\mathbb Z[1/p])\bigr).
\]
We denote by $\mu_S$ the normalized $\mathbf G(\mathbb R)\times\mathbf G(\mathbb Q_p)$-invariant probability
measure on $X_S$. Let $u\in \mathbf G(\mathbb R)$ be quasi-unipotent with chain structure
\[
(m_1,\ldots,m_l;m_{l+1},\ldots,m_n).
\]
For $v\in \mathbf G(\mathbb Z_p)$, consider the translation induced by left multiplication
\[
T_S:X_S\longrightarrow X_S,\qquad T_S(x):=(u,v)x.
\]
Define
\[
R:=
\sum_{i=1}^{l}\binom{m_i}{2}
+2\sum_{i=l+1}^{n}\binom{m_i}{2}.
\]

Our main result is the following.

\begin{theorem}\label{thm:main}
For the polynomial scale $a_\chi(N)=N^\chi$,
\[
h_{\mu_S,a_\chi}(T_S)
=
h_{\mathrm{top},a_\chi}(T_S)
=
R.
\]
\end{theorem}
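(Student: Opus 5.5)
The plan is to reduce everything to the real result of \cite{KVW} by exploiting the compactness of the $p$-adic part, proving the two inequalities $h_{\mathrm{top},a_\chi}(T_S)\leq R$ and $h_{\mu_S,a_\chi}(T_S)\geq R$. The chain $h_{\mu_S}\leq h_{\mathrm{top}}$ (which I take as the standard variational-type comparison for slow entropy, as in \cite{KVW}) then closes the loop.

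\textbf{Structure.} Put $K_p:=\mathbf G(\mathbb Z_p)$, a compact open subgroup of $\mathbf G(\mathbb Q_p)$ containing $v$. Since $\mathbf G(\mathbb Z[1/p])$ is dense in $\mathbf G(\mathbb Q_p)$, we have $\mathbf G(\mathbb Q_p)=K_p\,\mathbf G(\mathbb Z[1/p])$, and $\mathbf G(\mathbb Z[1/p])\cap K_p=\mathbf G(\mathbb Z)$. This gives the standard identification
\[
X_S\cong (\mathbf G(\mathbb R)\times K_p)/\Delta(\mathbf G(\mathbb Z)),
\]
so $X_S$ fibers over $X:=\mathbf G(\mathbb R)/\mathbf G(\mathbb Z)$ with compact fibre $K_p$. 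On this model $T_S$ acts by $(g,k)\mapsto(ug,vk)$, and the projection $\pi\colon X_S\to X$ intertwines $T_S$ with $T\colon x\mapsto ux$ and pushes $\mu_S$ to the Haar measure $\mu$. I will use a metric on $X_S$ induced from a right $\mathbf G(\mathbb Z)$-invariant-compatible product of a right-invariant Riemannian metric on $\mathbf G(\mathbb R)$ and a bi-invariant ultrametric on $K_p$, chosen so that $\pi$ is locally an isometry on the real coordinate.

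\textbf{Lower bound.} By \cite{KVW}, $h_{\mu,a_\chi}(T)=R$. Any partition $\xi$ of $X$ pulls back to $\pi^{-1}\xi$ on $X_S$, and Hamming distances agree: $d^N_{\pi^{-1}\xi}(x,y)=d^N_\xi(\pi x,\pi y)$. Hamming balls pull back to Hamming balls with the same measure, so $S_{\pi^{-1}\xi}(T_S,N,\varepsilon)\geq S_\xi(T,N,\varepsilon)$. Hence $h_{\mu_S,a_\chi}(T_S)\geq h_{\mu,a_\chi}(T)=R$. This is the monotonicity of slow entropy under factors.

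\textbf{Upper bound.} Since the ultrametric on $K_p$ is bi-invariant, left multiplication by $v^n$ is an isometry of $K_p$. Locally, the Bowen distance therefore splits as
\[
d^N_{T_S}\bigl((g,k),(g',k')\bigr)\approx\max\bigl(d^N_T(g,g'),\,d_p(k,k')\bigr).
\]
This uses right invariance of the real metric, so that $d(u^ng,u^ng')$ is computed in the real factor exactly as in \cite{KVW}. A compact set $\tilde K\subset X_S$ projects to a compact set $K\subset X$. Covering $K$ by $N_{T,K}(\varepsilon,N)$ real Bowen balls and covering $K_p$ by $C(\varepsilon)$ balls of radius $\varepsilon$ (a number independent of $N$) yields
\[
N_{T_S,\tilde K}(c\varepsilon,N)\leq C(\varepsilon)\,N_{T,K}(\varepsilon,N).
\]
The factor $C(\varepsilon)$ does not affect polynomial exponents, so $h_{\mathrm{top},a_\chi}(T_S)\leq h_{\mathrm{top},a_\chi}(T)=R$, again by \cite{KVW}.

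\textbf{Main obstacle.} The delicate step is the metric bookkeeping in the upper bound. The quotient by the diagonal lattice means the product-metric splitting holds only within the injectivity radius, uniformly over the compact set $\tilde K$. Moreover, the orbit $u^n g$ leaves every compact set, so the real part of the argument must be exactly the one in \cite{KVW}, not a naive compact-set estimate. I would handle this by choosing $\varepsilon$ smaller than the injectivity radius on a thickening of $K$ and invoking the \cite{KVW} covering estimate in its right-invariant-metric form. One more point needs verifying: that the metric comparison $h_{\mu}\leq h_{\mathrm{top}}$ is valid in the noncompact setting used here (sup over compact $K$). If it is unavailable, I would instead bound $h_{\mu_S}$ above directly, lifting the partition argument of \cite{KVW} with an extra $K_p$-partition into finitely many cosets of a small open subgroup, which $v$ permutes isometrically.
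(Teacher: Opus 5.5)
Your identification $X_S\cong(G\times K_p)/\Delta(\mathbf G(\mathbb Z))$, your lower bound via the factor $Y=G/\mathbf G(\mathbb Z)$ and \cite{KVW}*{Theorem~1.11}, and your use of $h_{\mu}\leq h_{\mathrm{top}}$ all match the paper. On the last point, the paper simply invokes \cite{KT}*{Proposition~2} on the noncompact model.

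\textbf{The gap is in the upper bound.} The inequality $N_{T_S,\tilde K}(c\varepsilon,N)\leq C(\varepsilon)N_{T,K}(\varepsilon,N)$ does not follow from covering $\pi(\tilde K)$ by Bowen balls of $Y$. Suppose $g\mathbf G(\mathbb Z)$ and $g'\mathbf G(\mathbb Z)$ lie in a common $Y$-Bowen ball. Then for each $n<N$ there is some $\gamma_n\in\mathbf G(\mathbb Z)$ with $d_G(u^ng\gamma_n,u^ng')<\varepsilon$, but nothing forces $\gamma_n$ to be independent of $n$. When $u^ng\mathbf G(\mathbb Z)$ enters a region whose injectivity radius is comparable to $\varepsilon$, a point $g'=\exp(V)g$ with $\operatorname{Ad}(u^n)V$ no longer small can stay $\varepsilon$-close in $Y$ through a nontrivial $\gamma_n$. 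On $X$ the same lattice element acts diagonally, so the $p$-adic coordinate contributes $d_p(k\gamma_n,k')$. This is essentially $\|\gamma_n-I\|_p$, which is not small unless $\gamma_n$ lies in a deep congruence subgroup, and in general no other lattice element makes both coordinates small at once.

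So your splitting $d^N_{T_S}\approx\max(d^N_T,d_p)$ is valid only for the group-level distance $\max_{n<N}d_G(u^ng,u^ng')$ computed with one fixed lift. It is not valid for the Bowen distance on $Y$. Taking $\varepsilon$ below the injectivity radius of a thickening of $K$ controls only $n=0$. Moreover, \cite{KVW}*{Theorem~1.11} is a statement about $Y$-Bowen balls, so it cannot be used as a black box here.

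\textbf{The missing ingredient} is to count the group-level boxes $\exp(B_N(0,\delta))g$ directly, which is the paper's route. By \cite{KVW}*{Corollary~2.8} and a Jacobian bound for $\exp$, these boxes have Haar measure at least $cN^{-R}$. The paper shows that each $B^N_{T_X}(x,\varepsilon)$ with $x=(g,k)\Gamma\in K$ contains the injective image of $\exp(B_N(0,\delta))g\times B_{d_p}(k,\varepsilon)$. This uses one representative for all $n$ together with bi-invariance of $d_p$ on $K_p$. Hence disjoint Bowen balls each have measure $\gtrsim N^{-R}$, and $S_{T_X,K}(\varepsilon,N)\leq CN^R$.

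Your covering argument can be repaired the same way:
\begin{enumerate}
\item Pick a maximal family $g_i$ with $g_i\mathbf G(\mathbb Z)\in\pi(\tilde K)$ whose projected boxes are disjoint in $Y$. By volume there are $O(N^R)$ of them.
\item If the box at $g$ meets the box at $g_i$, then $\exp(V)g=\exp(W)g_i\gamma$. This yields a single $\gamma$ with $d_G(u^ng\gamma^{-1},u^ng_i)<2\varepsilon$ for all $n<N$.
\item For each $i$, cover $K_p$ by $C(\varepsilon)$ balls $B_{d_p}(k_{ij},\varepsilon)$. Using the representative $(g\gamma^{-1},k\gamma^{-1})$ of $x$, you get $x\in B^N_{T_X}((g_i,k_{ij})\Gamma,2\varepsilon)$.
\end{enumerate}
Either way, the essential input is the Lie-algebra volume estimate, not the value of the slow entropy on $Y$.
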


\medskip

We briefly describe the proof. We begin by identifying the
$S$-arithmetic homogeneous space with a more convenient homogeneous space
model. This identification is equivariant, uniform, and measure preserving,
so both metric slow entropy and topological slow entropy
remain unchanged. We may therefore work with this model throughout the proof.

The next step is to project onto real homogeneous space. This projection
gives a measure preserving factor of the dynamical system. The entropy formula
of \cite{KVW}*{Theorem~1.11}, together with monotonicity of slow entropy under
factors, then gives the desired lower bound $R$ for the metric slow entropy. A slow version of Goodwyn's inequality further
bounds the metric slow entropy by topological
slow entropy.

Thus, the main remaining task is to obtain the reverse upper bound for the topological slow entropy. We prove this by estimating the volumes of Bowen
balls. Combining this estimate with the preceding lower bounds yields the
desired equality.

\par\medskip
\noindent\textbf{Acknowledgements:}
The author would like to thank his advisor, Daren Wei, for his guidance, encouragement, and many helpful discussions.
\par\medskip

\section{Preliminaries}

\subsection{Homogeneous spaces and metrics considered}

Let $L$ be a locally compact second countable group and $\Lambda<L$ a
lattice (that is, a discrete subgroup such that $L/\Lambda$ carries a
finite $L$-invariant measure). Let $m_L$ be a Haar measure on $L$, and
let $\mu_{L/\Lambda}$ be the induced $L$-invariant measure on
$L/\Lambda$. We normalize $m_L$ so that $\mu_{L/\Lambda}$ is a
probability measure.

Let $d_L$ be a right invariant metric on $L$ compatible with its
topology. The induced quotient metric on $L/\Lambda$ is
\[
d_{L/\Lambda}(g_1\Lambda,g_2\Lambda)
:=
\inf_{\gamma\in\Lambda} d_L(g_1\gamma,g_2).
\]
Let $q:L\to L/\Lambda$ denote the natural projection map. For a compact set
$K\subset L/\Lambda$, define its injectivity radius by
\[
\operatorname{inj}(K)
:=
\sup\left\{
r>0:
q|_{B_{d_L}(g,r)}
\text{ is injective for every }g\Lambda\in K
\right\}.
\]
Note that every compact set $K\subset L/\Lambda$ has positive
injectivity radius:
\[
\operatorname{inj}(K)>0.
\]
Moreover, if $A\subset L$ is measurable and $q$ is injective on $A$,
then
\[
\mu_{L/\Lambda}(q(A))=m_L(A).
\]

\medskip

\noindent\textbf{Paper-specific notation.}
We fix the following notation for the remainder of the paper. Define
\[
\begin{gathered}
G:=\mathbf G(\mathbb R),\qquad
G_p:=\mathbf G(\mathbb Q_p),\qquad
K_p:=\mathbf G(\mathbb Z_p),\\
G_S:=\mathbf G(\mathbb R)\times\mathbf G(\mathbb Q_p),\qquad
\Gamma_S:=\Delta\bigl(\mathbf G(\mathbb Z[1/p])\bigr),
\qquad X_S:=G_S/\Gamma_S,\\
H:=\mathbf G(\mathbb R)\times\mathbf G(\mathbb Z_p),\qquad
\Gamma:=\Delta\bigl(\mathbf G(\mathbb Z)\bigr),\\
X:=H/\Gamma,\qquad
Y:=\mathbf G(\mathbb R)/\mathbf G(\mathbb Z).
\end{gathered}
\]

We denote by $T_S$, $T_X$, and $T_Y$ the transformations induced by
left multiplication by $(u,v)$, $(u,v)$, and $u$ on $X_S$, $X$, and $Y$,
respectively.

Since topological slow entropy is independent of the choice of
compatible right invariant metric, we fix a right invariant
Riemannian metric $d_G$ on $G$ and a compatible proper
right invariant metric $d_p$ on $G_p$ whose restriction to $K_p$
is bi-invariant and given by
\[
d_p(g,h)=\|g-h\|_p
:=\max_{i,j}|g_{ij}-h_{ij}|_p;
\]
see \cite{Ruhr}. Equip $G_S$ and $H$ with the metrics
\[
d_{G_S}:=\max\{d_G,d_p\},\qquad d_H:=d_{G_S}|_{H\times H}.
\]
and denote the induced quotient metrics on $X_S,X,Y$ by
$d_S,d_X,d_Y$, respectively.

\section{Proof of the Main Theorem}\label{sec:proof}

\subsection{Preliminary results}

We begin by passing from the $S$-arithmetic quotient $X_S$ to the
quotient $X=H/\Gamma$.

\begin{proposition}\label{prop:uniform-conjugacy}
The natural map
\[
\Phi:X\longrightarrow X_S,\qquad
\Phi(h\Gamma)=h\Gamma_S,
\]
satisfies the following:
\begin{enumerate}[label=(\roman*)]
    \item $\Phi$ is a uniform homeomorphism satisfying
\[
\Phi(hx)=h\Phi(x),\qquad h\in H,\ x\in X.
\]

    \item $\Gamma$ is a lattice in $H$, and, if $\mu_X$ denotes the
    normalized $H$-invariant probability measure on $X$, then
    \[
    \Phi_*\mu_X=\mu_S;
    \]

    \item $\Phi$ conjugates $T_X$ and $T_S$. Consequently,
    \[
    h_{\mu_X,a_\chi}(T_X)=h_{\mu_S,a_\chi}(T_S),
    \qquad
    h_{\mathrm{top},a_\chi}(T_X)
    =
    h_{\mathrm{top},a_\chi}(T_S).
    \]
\end{enumerate}
\end{proposition}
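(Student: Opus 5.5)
The plan is to show first that $\Phi$ is a bijection, then that it is a uniform homeomorphism, and then read off the measure and conjugacy statements.

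\emph{Well-definedness and injectivity.} Note that $\Gamma=\Delta(\mathbf G(\mathbb Z))=\Gamma_S\cap H$, because $\mathbf G(\mathbb Z[1/p])\cap\mathbf G(\mathbb Z_p)=\mathbf G(\mathbb Z)$: a matrix with entries in $\mathbb Z[1/p]\cap\mathbb Z_p=\mathbb Z$ and inverse of the same type lies in $\mathbf G(\mathbb Z)$. So $\Phi$ is well defined. It is injective since $h_1\Gamma_S=h_2\Gamma_S$ with $h_i\in H$ gives $h_1^{-1}h_2\in H\cap\Gamma_S=\Gamma$.

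\emph{Surjectivity.} Let $(g,g_p)\in G_S$. The set $K_p$ is open in $G_p$, and $\mathbf G(\mathbb Z[1/p])$ is dense in $G_p$. Hence we can pick $\gamma$ in the open set $g_p^{-1}K_p\cap\mathbf G(\mathbb Z[1/p])$, so that $g_p\gamma\in K_p$. Then $(g,g_p)\Gamma_S=(g\gamma,g_p\gamma)\Gamma_S$ with $(g\gamma,g_p\gamma)\in H$.

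\emph{Equivariance, continuity and openness.} Equivariance $\Phi(hx)=h\Phi(x)$ is immediate. $\Phi$ is the map induced by the inclusion $H\hookrightarrow G_S$, so it is continuous. Since $H$ is open in $G_S$, the projection $G_S\to X_S$ restricted to $H$ is open. Thus $\Phi$ is an open continuous bijection, hence a homeomorphism.

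\emph{Uniformity.} Since $d_H$ is the restriction of $d_{G_S}$, we have $d_S(\Phi x,\Phi y)\le d_X(x,y)$, so $\Phi$ is $1$-Lipschitz. For the inverse, take $\delta<1$. If $d_{G_S}(h_1\gamma,h_2)<\delta$ with $h_i\in H$ and $\gamma\in\Gamma_S$, then $d_p((h_1)_p\gamma,(h_2)_p)<1$. I will arrange (possibly after rescaling $d_p$) that the $d_p$-ball of radius $1$ about $K_p$, i.e.\ about points of $K_p$, is contained in $K_p$, which is possible since the metric is ultrametric on $K_p$ and $K_p$ is open. Then $(h_1)_p\gamma\in K_p$, so $\gamma_p\in K_p$ and $\gamma\in\Gamma$. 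Therefore $d_X(h_1\Gamma,h_2\Gamma)=d_S(\Phi(h_1\Gamma),\Phi(h_2\Gamma))$ whenever the latter is $<\delta$. This makes $\Phi$ a local isometry at uniform scale $\delta$, and so $\Phi^{-1}$ is uniformly continuous. Checking that the chosen $d_p$ has this property, i.e.\ that points within distance $1$ of $K_p$ lie in $K_p$, is the main technical point. If it fails for $d_p$ as given, I will use instead that $d_p$ is right invariant and proper, and that $K_p$ is open, and take $\delta$ to be the distance from $K_p$ to $G_p\setminus K_p$. This distance is positive because $K_p$ is a compact open subgroup and $d_p$ is right invariant.

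\emph{Measure.} $\Gamma$ is discrete in $H$. Consider the measure $(\Phi^{-1})_*\mu_S$ on $X$. It is $H$-invariant by equivariance, and it is finite. Hence $\Gamma$ is a lattice in $H$. By uniqueness of the $H$-invariant probability measure on $H/\Gamma$, this measure equals $\mu_X$, which gives $\Phi_*\mu_X=\mu_S$.

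\emph{Conjugacy and entropy.} Equivariance with $h=(u,v)\in H$, using $v\in K_p$, gives $\Phi\circ T_X=T_S\circ\Phi$. The map $\Phi$ is a measure-theoretic isomorphism, so it carries finite partitions to finite partitions and preserves Hamming distances and measures; hence the metric slow entropies agree. For the topological slow entropy, $\Phi$ and $\Phi^{-1}$ are uniformly continuous and bijective on compact sets. Given $\varepsilon$, there is $\varepsilon'$ such that the image of a Bowen ball of radius $\varepsilon'$ lies in a Bowen ball of radius $\varepsilon$, and conversely. Also $\Phi$ maps compact sets to compact sets in both directions. Hence the covering numbers compare in both directions after $\varepsilon\to0$, and the topological slow entropies are equal.
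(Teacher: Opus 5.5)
Your proposal is correct and follows essentially the same route as the paper: $\Gamma_S\cap H=\Gamma$ gives well-definedness and injectivity, and density plus openness of $K_p$ gives surjectivity. The rest also matches: the $1$-Lipschitz bound for $\Phi$, the observation that a $\delta$-close representative forces $\gamma\in\mathbf G(\mathbb Z[1/p])\cap K_p=\mathbf G(\mathbb Z)$, pulling back $\mu_S$ with uniqueness of the invariant measure, and direct transfer of the conjugacy. Your fallback choice of $\delta$, the positive distance from $K_p$ to $G_p\setminus K_p$ via right invariance, is exactly the paper's choice of $\rho$ with $B_{d_p}(e,\rho)\subset K_p$, so the hedged ``radius $1$'' rescaling is unnecessary.
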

\begin{proof}
We first prove (i). Since
\[
\Gamma_S\cap H
=
\Delta\bigl(\mathbf G(\mathbb Z[1/p])\cap K_p\bigr)
=
\Delta(\mathbf G(\mathbb Z))
=
\Gamma,
\]
the map $\Phi$ is well defined and injective.

Since $\mathbf G(\mathbb Z[1/p])$ is dense in $G_p$ and $K_p$ is
open, every coset of $K_p$ in $G_p$ meets
$\mathbf G(\mathbb Z[1/p])$. Hence
\[
G_p=K_p\,\mathbf G(\mathbb Z[1/p]),
\]
and therefore $G_S=H\Gamma_S$. Thus every point of $X_S$ has a
representative in $H$, so $\Phi$ is surjective. The
$H$-equivariance of $\Phi$ follows directly from its definition.

For $h_1,h_2\in H$, since $\Gamma\subset\Gamma_S$,
\[
\begin{aligned}
d_S\bigl(\Phi(h_1\Gamma),\Phi(h_2\Gamma)\bigr)
&=\inf_{\eta\in\Gamma_S}d_{G_S}(h_1\eta,h_2)\\
&\leq \inf_{\eta\in\Gamma}d_{G_S}(h_1\eta,h_2)
 =d_X(h_1\Gamma,h_2\Gamma).
\end{aligned}
\]
Thus $\Phi$ is uniformly continuous.

To prove uniform continuity of $\Phi^{-1}$, choose $\rho>0$ such that
\[
B_{d_p}(e,\rho)\subset K_p,
\]
which is possible since $K_p$ is open in $G_p$. Given
$\varepsilon>0$, let $0<\delta<\min\{\varepsilon,\rho\}$. Suppose
\[
d_S\bigl(\Phi(h_1\Gamma),\Phi(h_2\Gamma)\bigr)<\delta,
\qquad h_1,h_2\in H.
\]
Then there exists $\gamma\in\mathbf G(\mathbb Z[1/p])$ such that
\[
d_{G_S}\bigl(h_1\Delta(\gamma),h_2\bigr)<\delta.
\]
Write $h_i=(g_i,k_i)$ with $k_i\in K_p$. From the $p$-adic
coordinate,
\[
d_p(k_1\gamma,k_2)<\delta,
\]
and right invariance of $d_p$ gives
\[
d_p(k_1\gamma k_2^{-1},e)<\rho.
\]
Hence $k_1\gamma k_2^{-1}\in K_p$, and therefore
\[
\gamma\in\mathbf G(\mathbb Z[1/p])\cap K_p
=\mathbf G(\mathbb Z).
\]
Thus $\Delta(\gamma)\in\Gamma$, and consequently
\[
d_X(h_1\Gamma,h_2\Gamma)
\leq d_{G_S}\bigl(h_1\Delta(\gamma),h_2\bigr)
<\delta<\varepsilon.
\]
This proves (i).

\medskip

For (ii), pull back $\mu_S$ by $\Phi$:
\[
\nu:=(\Phi^{-1})_*\mu_S.
\]
Since $\Phi$ is $H$-equivariant, $\nu$ is an $H$-invariant
probability measure on $H/\Gamma$. Hence $\Gamma$ is a lattice in
$H$, and by uniqueness of the normalized $H$-invariant measure,
$\nu=\mu_X$. Equivalently,
\[
\Phi_*\mu_X=\mu_S.
\]

\medskip

For (iii), by $H$-equivariance,
\[
\Phi\circ T_X=T_S\circ\Phi.
\]
Thus $\Phi$ is both a measure preserving conjugacy and a uniform
conjugacy. The invariance properties of slow entropy therefore give
\[
h_{\mu_X,a_\chi}(T_X)=h_{\mu_S,a_\chi}(T_S),
\qquad
h_{\mathrm{top},a_\chi}(T_X)
=
h_{\mathrm{top},a_\chi}(T_S),
\]
which proves (iii).
\end{proof}

\medskip

Next, we prove one side of the desired entropy equality using
the real homogeneous space $Y$.
\begin{proposition}\label{prop:factor}
Define
\[
\pi:X\longrightarrow Y,\qquad
\pi((g,k)\Gamma)=g\mathbf G(\mathbb Z),
\]
and let $\mu_Y:=\pi_*\mu_X$. Then $\mu_Y$ is a $G$-invariant
probability measure on $Y$, and hence $\mathbf G(\mathbb Z)$ is a
lattice in $G$. Moreover, $\pi$ is a factor map from
$(X,\mu_X,T_X)$ onto $(Y,\mu_Y,T_Y)$. Consequently,
\[
h_{\mu_X,a_\chi}(T_X)
\geq
h_{\mu_Y,a_\chi}(T_Y).
\]
\end{proposition}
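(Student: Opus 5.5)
First I will check that \(\pi\) is well defined and continuous. If \((g,k)\Gamma=(g',k')\Gamma\), then \((g',k')=(g\gamma,k\gamma)\) for some \(\gamma\in\mathbf G(\mathbb Z)\). Hence \(g'\mathbf G(\mathbb Z)=g\mathbf G(\mathbb Z)\), so \(\pi\) is well defined. It is the map induced on quotients by the continuous homomorphism \(\mathrm{pr}_1:H\to G\), which is surjective and sends \(\Gamma\) onto \(\mathbf G(\mathbb Z)\). So \(\pi\) is continuous, hence Borel, and surjective. For \(g_0\in G\), regard \((g_0,e)\in H\). Then
\[
\pi\bigl((g_0,e)(g,k)\Gamma\bigr)=g_0g\,\mathbf G(\mathbb Z)=g_0\,\pi\bigl((g,k)\Gamma\bigr).
\]
In particular, taking the element \((u,v)\) gives \(\pi\circ T_X=T_Y\circ\pi\).

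Next I will treat the measure. By Proposition~\ref{prop:uniform-conjugacy}(ii), \(\mu_X\) is an \(H\)-invariant probability measure. For a Borel set \(A\subset Y\) and \(g_0\in G\), equivariance gives
\[
\mu_Y(g_0A)=\mu_X\bigl(\pi^{-1}(g_0A)\bigr)=\mu_X\bigl((g_0,e)\pi^{-1}(A)\bigr)=\mu_X\bigl(\pi^{-1}(A)\bigr)=\mu_Y(A).
\]
So \(\mu_Y\) is a \(G\)-invariant probability measure on \(G/\mathbf G(\mathbb Z)\). The group \(\mathbf G(\mathbb Z)\) is discrete in \(G\) because it lies in \(\mathrm{SL}_d(\mathbb Z)\). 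Therefore it is a lattice, and \(\mu_Y\) is the normalized Haar-induced measure. This is consistent with the remark citing \cite{Gui}. The relation \(\pi\circ T_X=T_Y\circ\pi\) together with \(\pi_*\mu_X=\mu_Y\) then shows that \(\pi\) is a factor map.

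Finally I will prove the entropy inequality by the standard monotonicity argument, since the paper does not state it explicitly. Let \(\xi\) be a finite measurable partition of \(Y\), and set \(\pi^{-1}\xi:=\{\pi^{-1}(P):P\in\xi\}\). Because \(\pi\circ T_X^n=T_Y^n\circ\pi\), for all \(x,x'\in X\) we have
\[
d^N_{\pi^{-1}\xi}(x,x')=d^N_{\xi}(\pi x,\pi x').
\]
Hence the preimage of the \(d^N_\xi\)-ball \(B(y,\varepsilon)\) is the \(d^N_{\pi^{-1}\xi}\)-ball of radius \(\varepsilon\) about any point of \(\pi^{-1}(y)\); the fibre is nonempty because \(\pi\) is surjective. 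Now take balls \(B(y_j,\varepsilon)\) that cover \(\mu_Y\)-measure greater than \(1-\varepsilon\). Their preimages are the same number of balls in \(X\), and they cover \(\mu_X\)-measure greater than \(1-\varepsilon\). This gives
\[
S_{\pi^{-1}\xi}(T_X,N,\varepsilon)\leq S_\xi(T_Y,N,\varepsilon).
\]
For the reverse inequality, start from a cover in \(X\) by balls centered at \(x_j\). Each such ball is contained in \(\pi^{-1}\bigl(B(\pi x_j,\varepsilon)\bigr)\), by the displayed identity. So the images give a cover in \(Y\) with the same count and with measure greater than \(1-\varepsilon\), and the two counts are equal. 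Taking the supremum over partitions of \(X\), which includes all the pulled-back partitions \(\pi^{-1}\xi\), yields \(h_{\mu_X,a_\chi}(T_X)\geq h_{\mu_Y,a_\chi}(T_Y)\).

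I expect no real obstacle. The only points needing care are these: the Hamming balls are defined with arbitrary centers, which the fibre argument above handles; and well-definedness requires \(\Gamma\) to be exactly the diagonal copy of \(\mathbf G(\mathbb Z)\).
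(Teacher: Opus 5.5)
Your proposal is correct and follows the same route as the paper. You use the $G$-equivariance of $\pi$ via $G\times\{e\}\subset H$, push the $H$-invariant measure $\mu_X$ forward to a $G$-invariant probability measure, use the intertwining $\pi\circ T_X=T_Y\circ\pi$, and then apply factor monotonicity of metric slow entropy. Beyond the paper, you spell out two points it leaves implicit: the discreteness of $\mathbf G(\mathbb Z)$, needed for the lattice conclusion, and a proof of factor monotonicity via pulled-back partitions $\pi^{-1}\xi$, of which only the inequality $S_{\pi^{-1}\xi}(T_X,N,\varepsilon)\geq S_\xi(T_Y,N,\varepsilon)$ is actually needed.
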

\begin{proof}
The map $\pi$ is well defined and surjective. Moreover, it is
$G$-equivariant, where $G$ acts on $X$ through $G\times\{e\}\subset H$.
Since $\mu_X$ is $H$-invariant, $\mu_Y=\pi_*\mu_X$ is therefore a
$G$-invariant probability measure on $Y$. Hence
$\mathbf G(\mathbb Z)$ is a lattice in $G$.

For every $(g,k)\Gamma\in X$,
\[
\pi\bigl(T_X((g,k)\Gamma)\bigr)
=\pi((ug,vk)\Gamma)
=ug\mathbf G(\mathbb Z)
=T_Y\bigl(\pi((g,k)\Gamma)\bigr).
\]
Thus $\pi\circ T_X=T_Y\circ\pi$, so $\pi$ is a measure preserving
factor map. The factor monotonicity of metric slow entropy gives
\[
h_{\mu_X,a_\chi}(T_X)
\geq h_{\mu_Y,a_\chi}(T_Y).
\]
\end{proof}

Note that \cite{KT}*{Proposition~2} also applies to $(X,\mu_X,T_X)$:
\begin{theorem}[Slow Goodwyn's theorem, \cite{KT}*{Proposition~2}] \label{thm:slow-goodwyn}
We have
\[
h_{\mu_X,a_\chi}(T_X)
\leq
h_{\mathrm{top},a_\chi}(T_X).
\]
\end{theorem}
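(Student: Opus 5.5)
The plan is to adapt the Katok--Thouvenot argument for Goodwyn's inequality to the present setting. The statement is cited from \cite{KT}*{Proposition~2}, which is formulated for a continuous map of a compact metric space. Here $X=H/\Gamma$ need not be compact, since $G/\mathbf G(\mathbb Z)$ may have cusps. So the real task is to check that the proof survives when the supremum over compact sets $K$ appears in the definition of $h_{\mathrm{top},a_\chi}$. I would argue directly: fix a finite measurable partition $\xi$ and $\varepsilon>0$, and show that $S_\xi(T_X,N,\varepsilon)$ is bounded above by a covering number $N_{T_X,K}(\delta,N)$ for a suitable compact $K$ and $\delta>0$, uniformly in large $N$. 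Taking $\limsup_N$ against $N^\chi$, then $\varepsilon\to0$, then the supremum over $\xi$, gives the inequality.

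\textbf{Step 1 (regularizing the partition).} By inner regularity of $\mu_X$, choose compact sets $C_j\subset P_j$ for each atom $P_j$ of $\xi=\{P_1,\dots,P_k\}$ with $\mu_X(P_j\setminus C_j)$ small. Set $C:=\bigcup_j C_j$, so that $\mu_X(X\setminus C)<\varepsilon^2/8$. Since the $C_j$ are disjoint compact sets, $\delta:=\frac12\min_{i\neq j} d_X(C_i,C_j)>0$. Two points of $C$ that are $\delta$-close therefore lie in the same atom of $\xi$.

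\textbf{Step 2 (good points).} Let $f_N(x):=\frac1N|\{0\le n<N: T_X^n x\notin C\}|$. Then $\int f_N\,d\mu_X=\mu_X(X\setminus C)$ by invariance of $\mu_X$. Markov's inequality shows that the set $A_N:=\{f_N<\varepsilon/4\}$ has measure greater than $1-\varepsilon/2$. Choose a compact $K\subset X$ with $\mu_X(K)>1-\varepsilon/2$, so that $\mu_X(A_N\cap K)>1-\varepsilon$ for all $N$. Cover $K$ by $N_{T_X,K}(\delta,N)$ Bowen balls $B^N_{T_X}(x_i,\delta)$. Discard every ball that misses $A_N$, and for each remaining ball pick $y_i\in B^N_{T_X}(x_i,\delta)\cap A_N$.

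Take any $z$ in that ball which also lies in $A_N$. Then $d_X(T_X^nz,T_X^ny_i)<2\delta$ for every $n<N$. Outside at most $\varepsilon N/2$ indices, both $T_X^nz$ and $T_X^ny_i$ lie in $C$. I would take $\delta$ as one quarter of the minimal separation rather than one half, so that $2\delta$-closeness in $C$ forces the same atom. Then $d^N_\xi(z,y_i)<\varepsilon$. So the Hamming balls of radius $\varepsilon$ around the $y_i$ cover $A_N\cap K$, a set of measure greater than $1-\varepsilon$. This gives
\[
S_\xi(T_X,N,\varepsilon)\le N_{T_X,K}(\delta,N).
\]

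\textbf{Step 3 (passing to entropies).} If $\limsup_N S_\xi(T_X,N,\varepsilon)/N^\chi>0$, then the same holds for $N_{T_X,K}(\delta,N)$. Hence $\chi$ is at most the inner supremum defining topological slow entropy for $K$ at scale $\delta$. The covering numbers are monotone in $\delta$, so this inner supremum is bounded by its limit as $\delta\to0$. That limit is in turn bounded by $h_{\mathrm{top},a_\chi}(T_X)$. Letting $\varepsilon\to0$ and taking the supremum over $\xi$ yields the claim.

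The only delicate point is Step 1 and the associated bookkeeping. One needs a positive separation $\delta$ that is uniform in $N$ and does not depend on compactness of $X$. This is exactly what compact inner approximations of the atoms provide, together with $T_X$-invariance of $\mu_X$ in the Markov estimate. Uniform continuity of $T_X$ is not even needed here, since left translation is an isometry for the right-invariant metric.
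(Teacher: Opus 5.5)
Your argument is correct. The paper gives no proof of its own. It only asserts that Katok--Thouvenot's Proposition~2, which is stated for continuous maps of compact metric spaces, ``also applies'' to $(X,\mu_X,T_X)$.

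Your proposal writes out the Katok--Thouvenot (Goodwyn-type) argument in this noncompact setting, and so supplies the check the paper leaves implicit:
\begin{itemize}
\item Inner regularity of $\mu_X$ on the Polish space $X$ gives compact sets $C_j\subset P_j$ and a compact $K$ with $\mu_X(K)>1-\varepsilon/2$.
\item These depend on $\xi$ and $\varepsilon$ but not on $N$. That is harmless because the supremum over $K$ sits outside $\lim_{\delta\to0}$ in the definition of $h_{\mathrm{top},a_\chi}$.
\item The Markov step uses only $T_X$-invariance of $\mu_X$.
\end{itemize}
The bookkeeping also checks out: $\mu_X(X\setminus C)<\varepsilon^2/8$, threshold $\varepsilon/4$, and $\delta$ a quarter of the minimal separation give $d^N_\xi(z,y_i)<\varepsilon/2$. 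Hence $S_\xi(T_X,N,\varepsilon)\le N_{T_X,K}(\delta,N)$. The paper's citation is shorter but passes over the noncompactness; your version is self-contained.

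One statement in your final paragraph is false and should be deleted. Left translation is not an isometry for a right-invariant metric, since $d_G(ug_1,ug_2)=d_G(ug_1g_2^{-1}u^{-1},e)$ involves conjugation by $u$. Only the $p$-adic coordinate is isometric, because $d_p$ is bi-invariant on $K_p$. If $T_X$ were an isometry, Bowen balls would coincide with ordinary balls and $N_{T_X,K}(\varepsilon,N)$ would not depend on $N$. Then $h_{\mathrm{top},a_\chi}(T_X)$ would be $0$, contradicting Theorem~\ref{thm:main} whenever $R>0$.

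This error does not damage your proof. The proof needs only that Bowen balls are open (continuity of $T_X$) to make $N_{T_X,K}(\delta,N)$ finite, and the inequality itself does not even need that.
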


We next prove that the topological slow entropy is bounded above by R.
We begin with a volume estimate in the Lie algebra. Choose a chain basis of $\mathfrak g$ corresponding to the chain structure of
$\operatorname{ad}U$. For the
real chains, write
\[
e_{i,0},e_{i,1},\ldots,e_{i,m_i-1},
\qquad 1\leq i\leq l,
\]
and for the rotation chains,
\[
e^1_{i,0},e^2_{i,0},\ldots,
e^1_{i,m_i-1},e^2_{i,m_i-1},
\qquad l+1\leq i\leq n.
\]
Thus every $V\in\mathfrak g$ can be written uniquely as
\[
V
=
\sum_{i=1}^{l}\sum_{j=0}^{m_i-1}x_{i,j}e_{i,j}
+
\sum_{i=l+1}^{n}\sum_{j=0}^{m_i-1}
\bigl(x_{i,j}e^1_{i,j}+y_{i,j}e^2_{i,j}\bigr).
\]
Define
\[
\|V\|_\infty
=
\max\left\{
|x_{i,j}|,
\sqrt{x_{r,s}^{\,2}+y_{r,s}^{\,2}}
\right\},
\]
where
\[
1\leq i\leq l,\quad 0\leq j<m_i,
\qquad
l<r\leq n,\quad 0\leq s<m_r.
\]
For $\varepsilon>0$ and $N\geq1$, set
\[
B_N(0,\varepsilon)
=
\left\{
V\in\mathfrak g:
\|\operatorname{Ad}(u^n)V\|_\infty<\varepsilon
\text{ for all }0\leq n<N
\right\}.
\]
Let $m$, $m_p$, and $\lambda$ denote Haar measures on $G$ and $K_p$, and
Lebesgue measure on $\mathfrak g$, respectively.
\begin{proposition}[\cite{KVW}, Corollary~2.8]
\label{prop:lie-algebra-volume}
For each $\varepsilon>0$, there exists
$c(\varepsilon)>0$, independent of $N$, such that
\[
\lambda\bigl(B_N(0,\varepsilon)\bigr)
\geq c(\varepsilon)N^{-R},
\qquad N\geq1.
\]
\end{proposition}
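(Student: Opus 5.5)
The plan is to reduce the estimate to a block-by-block computation, using that $\operatorname{Ad}(u^n)=\exp(n\,\operatorname{ad}U)$ preserves each chain subspace. In the chosen basis, $\operatorname{Ad}(u^n)$ is block diagonal. On a real chain of length $m_i$ it acts as $\exp(nJ_{m_i})$. On a rotation chain it acts as $\exp(nJ^{\alpha}_{m_i})$, which equals $\exp(nJ_{m_i})\otimes I_2$ composed with the commuting block rotation $\exp(nQ_\alpha)$ in each $2$-plane. Since $\|\cdot\|_\infty$ uses the Euclidean norm in each $2$-plane, the rotation is an isometry for it. Thus the constraint on a rotation block reduces to the constraint for $\exp(nJ_{m_i})$ applied to $\mathbb C$-valued (equivalently $\mathbb R^2$-valued) coordinates. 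Because the norm is a max over coordinates, $B_N(0,\varepsilon)$ is a product over the chains of the corresponding single-chain sets. Its Lebesgue measure is therefore the product of the single-chain volumes, so it suffices to show that a real chain of length $m$ has volume $\gtrsim_\varepsilon N^{-\binom m2}$ and a rotation chain has volume $\gtrsim_\varepsilon N^{-2\binom m2}$.

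For a single real chain, orient the basis so that $J_m e_{j}=e_{j-1}$ (or the reverse, consistently). The $k$-th coordinate of $\exp(nJ_m)x$ is then $\sum_{j\geq k}\binom{n}{j-k}x_j$. Define the box
\[
x_j\in\Bigl(-\tfrac{c\varepsilon}{N^{j}},\tfrac{c\varepsilon}{N^{j}}\Bigr),\qquad 0\leq j<m,
\]
with $c=c(m)$ small. For $0\leq n<N$ we have $\binom{n}{j-k}\leq N^{j-k}$, so each coordinate is bounded by $\sum_{j\geq k}N^{j-k}c\varepsilon N^{-j}\leq m c\varepsilon$. Choosing $c<1/m$ puts the box inside the single-chain set. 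The box has volume $(2c\varepsilon)^m N^{-\sum_{j<m}j}=(2c\varepsilon)^mN^{-\binom m2}$.

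For a rotation chain, take the same box in each of the two coordinates $x_{i,j},y_{i,j}$, shrunk by an extra factor of $1/\sqrt2$ so that the Euclidean norm is controlled. The same estimate then gives volume $\gtrsim\varepsilon^{2m}N^{-2\binom m2}$. Multiplying over all chains yields $\lambda(B_N(0,\varepsilon))\geq c(\varepsilon)N^{-R}$ with $c(\varepsilon)$ depending only on $\varepsilon$ and the chain structure.

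The only step needing care is the decoupling of the rotation part, namely verifying that the real Jordan form is $\exp(nQ_\alpha)$-blockwise times $\exp(nJ_m)\otimes I_2$ and that these two factors commute. This holds because $J^\alpha_m=\operatorname{diag}(Q_\alpha,\dots,Q_\alpha)+N_m\otimes I_2$, and the two summands commute since $Q_\alpha$ commutes with $I_2$.
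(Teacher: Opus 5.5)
Your argument is correct. It differs from the paper mainly in being a proof at all: the paper gives none and simply imports the estimate from Corollary~2.8 of \cite{KVW}.

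You instead prove the needed bound directly, in three steps:
\begin{itemize}
\item $\operatorname{Ad}(u^n)=\exp(n\operatorname{ad}U)$ preserves the chain subspaces, and $\|\cdot\|_\infty$ is a maximum over them, so $B_N(0,\varepsilon)$ is a product of single-chain sets.
\item On a rotation chain, $I_m\otimes e^{nQ_\alpha}$ commutes with $J_m\otimes I_2$ and is an isometry for the planar Euclidean norms built into $\|\cdot\|_\infty$. This reduces the rotation case to the nilpotent one.
\item The box $|x_j|<c\varepsilon N^{-j}$ lies inside each single-chain set. This gives volume $(2c\varepsilon)^mN^{-\binom m2}$ for a real chain and the squared analogue for a rotation chain, so $c(\varepsilon)$ is explicit and of order $\varepsilon^{\dim\mathfrak g}$.
\end{itemize}

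One small slip: the entries of $\exp(nJ_m)$ are $n^r/r!$, not $\binom nr$; the binomial coefficients are the entries of $(I+J_m)^n$. Both are at most $N^r$ for $0\le n<N$, so your estimate is unaffected.

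What your route buys is self-containedness and an explicit constant. It gives only the easy direction. The reverse inequality $\lambda(B_N(0,\varepsilon))\lesssim N^{-R}$ would require controlling the coefficients of a polynomial bounded on $\{0,\dots,N-1\}$. However, the lower bound is the only direction used, in Proposition~\ref{prop:topological-upper-bound}. The matching lower bound on entropy comes instead from the factor $Y$ and Theorem~1.11 of \cite{KVW}.
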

The following proposition provides a local comparison between Lebesgue
measure on $\mathfrak g$ and Haar measure on $G$.
\begin{proposition}\label{prop:exp-volume-comparison}
There exist $\delta_0>0$ and $c_0>0$ such that, for every
$0<\delta<\delta_0$ and $N\geq1$,
\[
m\bigl(\exp(B_N(0,\delta))\bigr)
\geq
c_0\,\lambda\bigl(B_N(0,\delta)\bigr).
\]
\end{proposition}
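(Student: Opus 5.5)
The plan is to use that $\exp$ is a local diffeomorphism near $0$ and that Haar measure on $G$ has a smooth positive density in exponential coordinates. Since $\exp:\mathfrak g\to G$ has differential equal to the identity at $0$, the inverse function theorem gives $r_0>0$ such that $\exp$ restricted to $W:=\{V\in\mathfrak g:\|V\|_\infty<r_0\}$ is a diffeomorphism onto an open neighbourhood of $e$. I will take $\delta_0:=r_0$.

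First I note that $B_N(0,\delta)\subset W$ whenever $\delta<\delta_0$. Indeed, the $n=0$ condition in the definition of $B_N(0,\delta)$ is exactly $\|V\|_\infty<\delta$, so $B_N(0,\delta)\subset\{\|V\|_\infty<\delta\}\subset W$ for every $N\geq1$. This inclusion is what makes the constant independent of $N$: all the sets $B_N(0,\delta)$ sit inside one fixed compact region where $\exp$ is a diffeomorphism.

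Next I compare the measures. The pushforward of $m|_{\exp(W)}$ under $(\exp|_W)^{-1}$ is absolutely continuous with respect to $\lambda$, and its density is $J(V)=\kappa\,\bigl|\det\bigl(\tfrac{1-e^{-\operatorname{ad}V}}{\operatorname{ad}V}\bigr)\bigr|$. Here $\kappa>0$ is the normalisation constant relating $m$ to the Riemannian volume at $e$; this relies on Haar measure being given by a left- or right-invariant volume form, and $G$ being unimodular since it admits a lattice. The density $J$ is continuous on $\mathfrak g$ and equals $\kappa>0$ at $V=0$. After shrinking $r_0$ if necessary, I get $J\geq\kappa/2=:c_0$ on $\overline W$. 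By the change of variables formula and the injectivity of $\exp$ on $W$,
\[
m\bigl(\exp(B_N(0,\delta))\bigr)=\int_{B_N(0,\delta)}J\,d\lambda\geq c_0\,\lambda\bigl(B_N(0,\delta)\bigr).
\]
The measurability of $B_N(0,\delta)$ is clear, since it is an open set: a finite intersection of open conditions.

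No step is a real obstacle here. The only points needing care are checking that the density of Haar measure in exponential coordinates is continuous and positive near $0$, and that injectivity holds on all the $B_N(0,\delta)$ at once. The inclusion in the first step gives the latter, and the former is standard, via the formula for $d\exp$ or simply by smoothness of the pulled-back volume form with nonvanishing value at $0$.
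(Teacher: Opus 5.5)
Your proposal is correct and matches the paper's proof: choose $\delta_0$ so that $\exp$ is a diffeomorphism on the $\|\cdot\|_\infty$-ball, shrink it so that the continuous positive Haar density $J$ satisfies $J\geq c_0$ there, and use $B_N(0,\delta)\subset B_{\|\cdot\|_\infty}(0,\delta)$ to make the bound uniform in $N$. Your explicit formula for $J$ is a harmless extra, since the paper uses only that $J$ is continuous and positive at $0$.
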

\begin{proof}
Choose $\delta_0>0$ such that
\[
\exp:B_{\|\cdot\|_\infty}(0,\delta_0)\longrightarrow G
\]
is a diffeomorphism onto its image. Since m is Haar measure on G, there exists a positive continuous
function $J$ on $B_{\|\cdot\|_\infty}(0,\delta_0)$ such that
\[
m(\exp A)=\int_A J(V)\,d\lambda(V)
\]
for every measurable $A\subset B_{\|\cdot\|_\infty}(0,\delta_0)$.
Since $J(0)>0$, after decreasing $\delta_0$ if necessary, there exists
$c_0>0$ such that
\[
J(V)\geq c_0
\]
for all $V\in B_{\|\cdot\|_\infty}(0,\delta_0)$.

Since
\[
B_N(0,\delta)
\subset
B_{\|\cdot\|_\infty}(0,\delta)
\subset
B_{\|\cdot\|_\infty}(0,\delta_0),
\]
we obtain
\[
m\bigl(\exp(B_N(0,\delta))\bigr)
=
\int_{B_N(0,\delta)}J(V)\,d\lambda(V)
\geq
c_0\,\lambda\bigl(B_N(0,\delta)\bigr).
\]
\end{proof}

\subsection{Main results}

Let $K\subset X$ be compact. Since 
\[\pi:X\longrightarrow Y,\qquad
\pi((g,k)\Gamma)=g\mathbf G(\mathbb Z)\] is a continuous map, $\pi(K)\subset Y$ is compact.
Thus, \[
\operatorname{inj}(\pi(K))>0.
\]
Choose
\[
0<\varepsilon<\min\{1,\operatorname{inj}(\pi(K))\}.
\]
Since $\varepsilon<1$, we have
\[
B_{d_p}(k,\varepsilon)\subset K_p
\qquad\text{for every }k\in K_p.
\]
Then choose $\delta>0$ sufficiently small so that $\exp$ is a
diffeomorphism on $B_{\|\cdot\|_\infty}(0,\delta)$ and
\[
\|V\|_\infty<\delta
\quad\Longrightarrow\quad
d_G(\exp V,e)<\varepsilon.
\]
For $x=(g,k)\Gamma\in K$, set
\[
\Omega_x
=
\exp(B_N(0,\delta))g
\times
B_{d_p}(k,\varepsilon),
\]
and let \(q:H\to X=H/\Gamma\) denote the natural projection
given by \(q(h)=h\Gamma\).

\begin{proposition}\label{prop:bowen-neighborhood}
For every $x\in K$, the restriction $q|_{\Omega_x}$ is injective and
\[
q(\Omega_x)\subset B_{T_X}^N(x,\varepsilon).
\]
\end{proposition}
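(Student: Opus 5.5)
We prove the two claims separately. The inclusion into the Bowen ball is a direct computation with right invariance. Injectivity follows from the choice of $\varepsilon$ below the injectivity radius of $\pi(K)$, together with the fact that the $p$-adic ball lies inside $K_p$.

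\emph{Bowen ball inclusion.} Take $h=(\exp(V)g,k')\in\Omega_x$ with $V\in B_N(0,\delta)$ and $d_p(k,k')<\varepsilon$. For $0\leq n<N$ we have
\[
T_X^n(q(h))=(u^n\exp(V)g,\,v^nk')\Gamma,
\qquad
T_X^n x=(u^ng,\,v^nk)\Gamma .
\]
Since $d_X$ is a quotient metric, it suffices to bound $d_H$ between the representatives.

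In the real coordinate, write $u^n\exp(V)g=\exp(\operatorname{Ad}(u^n)V)\,u^ng$. Right invariance of $d_G$ then gives
\[
d_G(u^n\exp(V)g,\,u^ng)=d_G(\exp(\operatorname{Ad}(u^n)V),e).
\]
This is $<\varepsilon$ because $\|\operatorname{Ad}(u^n)V\|_\infty<\delta$, by the definition of $B_N(0,\delta)$ and the choice of $\delta$.

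In the $p$-adic coordinate, $v^n\in K_p$ and $d_p$ restricted to $K_p$ is bi-invariant. Hence
\[
d_p(v^nk',v^nk)=d_p(k',k)<\varepsilon .
\]
Here $k'\in K_p$ because $B_{d_p}(k,\varepsilon)\subset K_p$. Taking the maximum of the two coordinates gives $d_X(T_X^nq(h),T_X^nx)<\varepsilon$ for all $0\leq n<N$, that is, $q(h)\in B^N_{T_X}(x,\varepsilon)$.

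\emph{Injectivity.} Suppose $h_1,h_2\in\Omega_x$ and $h_1\Gamma=h_2\Gamma$, so $h_2=h_1\Delta(\gamma)$ with $\gamma\in\mathbf G(\mathbb Z)$. Write $h_i=(g_i,k_i)$.

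The real components satisfy $g_2=g_1\gamma$. Both $g_1$ and $g_2$ lie in $\exp(B_{\|\cdot\|_\infty}(0,\delta))g\subset B_{d_G}(g,\varepsilon)$; this uses right invariance and the choice of $\delta$. Since $g\mathbf G(\mathbb Z)\in\pi(K)$ and $\varepsilon<\operatorname{inj}(\pi(K))$, the projection $G\to Y$ is injective on $B_{d_G}(g,\varepsilon)$. Therefore $g_1\mathbf G(\mathbb Z)=g_2\mathbf G(\mathbb Z)$ forces $g_1=g_2$, so $\gamma=e$ and hence $h_1=h_2$.

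The main subtlety is exactly this step. One must use the injectivity radius of $Y$ rather than of $X$, which is why $\varepsilon$ was chosen relative to $\pi(K)$. One must also check that $B_{d_G}(g,\varepsilon)$ is precisely the ball appearing in the definition of $\operatorname{inj}$. It is, because $g\mathbf G(\mathbb Z)\in\pi(K)$ and $\operatorname{inj}$ is defined as a supremum, so any $\varepsilon$ strictly below it works.

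Note that the $p$-adic factor plays no role in injectivity: the real coordinate alone determines $\gamma$. This is consistent with the heuristic that the compact $p$-adic component adds no complexity.
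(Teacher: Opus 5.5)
Your proof is correct and follows the paper's argument essentially step for step: injectivity comes from applying the injectivity radius of $\pi(K)$ to the real coordinate to force $\gamma=e$, and the Bowen-ball inclusion comes from $u^n\exp(V)=\exp(\operatorname{Ad}(u^n)V)u^n$, right invariance of $d_G$, and bi-invariance of $d_p$ on $K_p$. The only difference is cosmetic: you conclude $h_1=h_2$ directly from $\gamma=e$, whereas the paper also recovers $V=W$ and $k_1=k_2$ from the parametrization, which is not needed.
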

\begin{proof}
We first prove that $q|_{\Omega_x}$ is injective. Suppose
\[
q(\exp(V)g,k_1)=q(\exp(W)g,k_2),
\]
where $V,W\in B_N(0,\delta)$ and
$k_1,k_2\in B_{d_p}(k,\varepsilon)$. Since
$\Gamma=\Delta(\mathbf G(\mathbb Z))$, there exists
$\gamma\in\mathbf G(\mathbb Z)$ such that
\[
(\exp(V)g,k_1)
=
(\exp(W)g,k_2)\Delta(\gamma).
\]
In particular,
\[
\exp(V)g=\exp(W)g\gamma.
\]

Since $V,W\in B_N(0,\delta)$, we have
\[
\|V\|_\infty,\ \|W\|_\infty<\delta.
\]
Hence, by the choice of $\delta$ and right invariance of $d_G$,
\[
d_G(\exp(V)g,g)<\varepsilon,
\qquad
d_G(\exp(W)g,g)<\varepsilon.
\]
Thus both $\exp(V)g$ and $\exp(W)g$ lie in
$B_{d_G}(g,\varepsilon)$. Moreover,
\[
\exp(V)g\mathbf G(\mathbb Z)
=
\exp(W)g\gamma\mathbf G(\mathbb Z)
=
\exp(W)g\mathbf G(\mathbb Z).
\]
Since
\[
g\mathbf G(\mathbb Z)=\pi(x)\in\pi(K)
\]
and $\varepsilon<\operatorname{inj}(\pi(K))$, the quotient map
\[
G\longrightarrow Y=G/\mathbf G(\mathbb Z)
\]
is injective on $B_{d_G}(g,\varepsilon)$. Therefore
\[
\exp(V)g=\exp(W)g.
\]
Comparing this with $\exp(V)g=\exp(W)g\gamma$ gives $\gamma=e$.
Hence $\exp(V)=\exp(W)$, and the choice of $\delta$ yields $V=W$.
The second coordinate then gives $k_1=k_2$. Thus
$q|_{\Omega_x}$ is injective.

We now prove the inclusion. Let
\[
y=q(\exp(V)g,k_1)\in q(\Omega_x),
\]
where $V\in B_N(0,\delta)$ and
$k_1\in B_{d_p}(k,\varepsilon)$. For $0\leq n<N$,
\[
T_X^n x=(u^ng,v^nk)\Gamma,
\qquad
T_X^n y=(u^n\exp(V)g,v^nk_1)\Gamma.
\]
Using
\[
u^n\exp(V)u^{-n}
=
\exp\bigl(\operatorname{Ad}(u^n)V\bigr),
\]
we obtain
\[
u^n\exp(V)g
=
\exp\bigl(\operatorname{Ad}(u^n)V\bigr)u^ng.
\]
Since $V\in B_N(0,\delta)$, $\text{for all } 0\leq n<N$,
\[
\|\operatorname{Ad}(u^n)V\|_\infty<\delta.\]
Therefore, by the choice of $\delta$ and right invariance of $d_G$,
\[
d_G(u^n\exp(V)g,u^ng)
=
d_G\bigl(\exp(\operatorname{Ad}(u^n)V),e\bigr)
<\varepsilon.
\]

Since $v\in K_p$ and $d_p$ is bi-invariant on $K_p$,
\[
d_p(v^nk_1,v^nk)
=
d_p(k_1,k)
<\varepsilon.
\]
Hence, for the metric on $H$,
\[
d_H\bigl(
(u^n\exp(V)g,v^nk_1),
(u^ng,v^nk)
\bigr)
<\varepsilon.
\]
Since $d_X$ is the induced quotient metric, it follows that
\[
d_X(T_X^ny,T_X^nx)<\varepsilon,
\qquad 0\leq n<N.
\]
Thus
\[
y\in B_{T_X}^N(x,\varepsilon).
\]
Since $y\in q(\Omega_x)$ was arbitrary,
\[
q(\Omega_x)\subset B_{T_X}^N(x,\varepsilon).
\]
\end{proof}
We now combine the preceding estimates to complete the upper bound for
topological slow entropy.
\begin{proposition}\label{prop:topological-upper-bound}
Let $K\subset X$ be compact. For every $0<\varepsilon<
\min\left\{1,\operatorname{inj}(\pi(K))\right\}$,
\[
S_{T_X,K}(\varepsilon,N)
\leq C(K,\varepsilon)N^R,
\qquad N\geq1,
\]
for some constant $C(K,\varepsilon)>0$ independent of $N$. Consequently,
\[
h_{\mathrm{top},a_\chi}(T_X)\leq R.
\]
\end{proposition}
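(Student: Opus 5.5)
The plan is a standard volume (packing) argument. Fix $K$, $\varepsilon$ as in the statement and $N\geq1$. Take a maximal family of pairwise disjoint Bowen balls $B_{T_X}^N(x_1,\varepsilon),\ldots,B_{T_X}^N(x_S,\varepsilon)$ with centers $x_j\in K$, so that $S=S_{T_X,K}(\varepsilon,N)$. Write $x_j=(g_j,k_j)\Gamma$, with $\delta$ chosen as before Proposition~\ref{prop:bowen-neighborhood}; we may also shrink $\delta$ below $\delta_0$ of Proposition~\ref{prop:exp-volume-comparison}. By Proposition~\ref{prop:bowen-neighborhood}, the sets $q(\Omega_{x_j})$ lie in the pairwise disjoint Bowen balls, so they are pairwise disjoint subsets of $X$. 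Hence
\[
S\cdot\min_j\mu_X\bigl(q(\Omega_{x_j})\bigr)\leq\sum_{j}\mu_X\bigl(q(\Omega_{x_j})\bigr)\leq\mu_X(X)=1 .
\]

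Next I bound each $\mu_X(q(\Omega_{x_j}))$ from below uniformly in $j$. Since $q$ is injective on $\Omega_{x_j}$ (Proposition~\ref{prop:bowen-neighborhood}), the preliminary remark gives $\mu_X(q(\Omega_{x_j}))=m_H(\Omega_{x_j})$, where $m_H$ is the suitably normalized Haar measure on $H$. We may take $m_H=m\otimes m_p$ after fixing normalizations, since $H=G\times K_p$ is a direct product and a product of Haar measures is Haar. Then
\[
m_H(\Omega_{x_j})=m\bigl(\exp(B_N(0,\delta))g_j\bigr)\,m_p\bigl(B_{d_p}(k_j,\varepsilon)\bigr).
\]
For the real factor, $G$ is a connected Lie group with a lattice, so it is unimodular and $m$ is right invariant. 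Therefore $m(\exp(B_N(0,\delta))g_j)=m(\exp(B_N(0,\delta)))$, and by Propositions~\ref{prop:exp-volume-comparison} and \ref{prop:lie-algebra-volume} this is at least $c_0c(\delta)N^{-R}$. For the $p$-adic factor, $B_{d_p}(k_j,\varepsilon)=B_{d_p}(e,\varepsilon)k_j$ by right invariance, and $B_{d_p}(e,\varepsilon)$ is a nonempty open subset of the compact group $K_p$. Its Haar measure is therefore a positive constant $c_p(\varepsilon)$ independent of $j$ and $N$. Combining these,
\[
S_{T_X,K}(\varepsilon,N)\leq\bigl(c_0c(\delta)c_p(\varepsilon)\bigr)^{-1}N^{R}=:C(K,\varepsilon)N^R .
\]

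For the entropy conclusion, for any $\chi>R$ we get $S_{T_X,K}(\varepsilon,N)/N^\chi\leq C(K,\varepsilon)N^{R-\chi}\to0$. Thus the inner supremum is at most $R$ for every sufficiently small $\varepsilon$, so the limit as $\varepsilon\to0$ is at most $R$. Taking the supremum over compact $K$, and using the equivalence of $N_{T,K}$ and $S_{T,K}$ in the definition, yields $h_{\mathrm{top},a_\chi}(T_X)\leq R$.

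The main obstacle is the measure bookkeeping: checking that the Haar measure on $H$ is the product $m\otimes m_p$ normalized consistently with $\mu_X$, and that the right translates by $g_j$ and $k_j$ do not change volumes. The first point only affects the constant. The second relies on unimodularity of $G$ (it has a lattice) and of the compact group $K_p$. It also matters that $\delta$ depends only on $\varepsilon$ and not on $N$, so that $c(\delta)$ is an admissible $N$-independent constant.
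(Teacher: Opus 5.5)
Your proposal is correct and follows the paper's proof essentially step for step. It places the disjoint sets $q(\Omega_{x_j})$ inside the disjoint Bowen balls, computes their measure as a product via injectivity of $q$, removes the translates by unimodularity of $G$ and $K_p$, and gets the uniform $N^{-R}$ lower bound from Propositions~\ref{prop:exp-volume-comparison} and \ref{prop:lie-algebra-volume}; your explicit requirement $\delta<\delta_0$, needed to apply Proposition~\ref{prop:exp-volume-comparison}, is a detail the paper leaves implicit.
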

\begin{proof}
Fix
\[
0<\varepsilon<
\min\left\{1,\operatorname{inj}(\pi(K))\right\},
\]
and choose $\delta=\delta(\varepsilon)>0$ sufficiently small so that
$\exp$ is a diffeomorphism on
$B_{\|\cdot\|_\infty}(0,\delta)$ and
\[
\|V\|_\infty<\delta
\quad\Longrightarrow\quad
d_G(\exp V,e)<\varepsilon.
\]
For $N\geq1$, let
\[
M=S_{T_X,K}(\varepsilon,N).
\]
Choose
\[
x_i=(g_i,k_i)\Gamma\in K,\qquad 1\leq i\leq M,
\]
as centers of pairwise disjoint $(\varepsilon,N)$-Bowen balls. Define
\[
\Omega_{x_i}
=
\exp(B_N(0,\delta))g_i
\times
B_{d_p}(k_i,\varepsilon).
\]
By Proposition~\ref{prop:bowen-neighborhood},
\[
q(\Omega_{x_i})\subset B_{T_X}^N(x_i,\varepsilon),
\]
so the sets $q(\Omega_{x_i})$ are pairwise disjoint. Hence
\[
\sum_{i=1}^{M}\mu_X\bigl(q(\Omega_{x_i})\bigr)\leq1.
\]

Since $q$ is injective on each $\Omega_{x_i}$, the quotient measure formula
gives
\[
\mu_X\bigl(q(\Omega_{x_i})\bigr)
=
c\,m\bigl(\exp(B_N(0,\delta))g_i\bigr)
m_p\bigl(B_{d_p}(k_i,\varepsilon)\bigr),
\]
for some constant $c>0$. Since
$\mathbf G(\mathbb Z)$ is a lattice in $G$, the group $G$ is unimodular,
and hence
\[
m\bigl(\exp(B_N(0,\delta))g_i\bigr)
=
m\bigl(\exp(B_N(0,\delta))\bigr).
\]
Moreover, by the bi-invariance of $d_p$ on
$K_p$,
\[
m_p\bigl(B_{d_p}(k_i,\varepsilon)\bigr)
=m_p\bigl(k_iB_{d_p}(e,\varepsilon)\bigr)=
m_p\bigl(B_{d_p}(e,\varepsilon)\bigr)
=:b(\varepsilon)>0.
\]
Therefore, by Propositions~\ref{prop:exp-volume-comparison} and
\ref{prop:lie-algebra-volume},
\[
\begin{aligned}
\mu_X\bigl(q(\Omega_{x_i})\bigr)
&=
c\,b(\varepsilon)\,
m\bigl(\exp(B_N(0,\delta))\bigr)\\
&\geq
c\,b(\varepsilon)c_0\,
\lambda\bigl(B_N(0,\delta)\bigr)\\
&\geq
c\,b(\varepsilon)c_0c(\delta)\,N^{-R}\\
&=:c_1(K,\varepsilon)N^{-R},
\end{aligned}
\]
where $c_1(K,\varepsilon)>0$ is independent of $i$ and $N$.
Consequently,
\[
1
\geq
\sum_{i=1}^{M}\mu_X\bigl(q(\Omega_{x_i})\bigr)
\geq
M c_1(K,\varepsilon)N^{-R},
\]
and therefore
\[
S_{T_X,K}(\varepsilon,N)
=
M
\leq
C(K,\varepsilon)N^R,
\]
where $C(K,\varepsilon)=c_1(K,\varepsilon)^{-1}$ is independent of $N$.
 
 Hence, for every
$\chi>R$,
\[
\limsup_{N\to\infty}
\frac{S_{T_X,K}(\varepsilon,N)}{a_\chi(N)}
\leq
C(K,\varepsilon)
\limsup_{N\to\infty}N^{R-\chi}
=0.
\]
Therefore,
\[
\sup\left\{
\chi:
\limsup_{N\to\infty}
\frac{S_{T_X,K}(\varepsilon,N)}{a_\chi(N)}>0
\right\}
\leq R.
\]
Letting $\varepsilon\to0$ and taking the supremum over all compact
$K\subset X$ gives
\[
h_{\mathrm{top},a_\chi}(T_X)\leq R.
\]
\end{proof}
\begin{proof}[Proof of Theorem \normalfont~\ref{thm:main}]
By \cite{KVW}*{Theorem~1.11},
\[
h_{\mu_Y,a_\chi}(T_Y)=R.
\]
Hence, by Proposition~\ref{prop:factor}, Theorem~\ref{thm:slow-goodwyn}, and
Proposition~\ref{prop:topological-upper-bound},
\[
R
\leq h_{\mu_X,a_\chi}(T_X)
\leq h_{\mathrm{top},a_\chi}(T_X)
\leq R.
\]
Therefore,
\[
h_{\mu_X,a_\chi}(T_X)
=
h_{\mathrm{top},a_\chi}(T_X)
=R.
\]
By Proposition~\ref{prop:uniform-conjugacy},
\[
h_{\mu_S,a_\chi}(T_S)
=
h_{\mu_X,a_\chi}(T_X)
=
h_{\mathrm{top},a_\chi}(T_X)
=
h_{\mathrm{top},a_\chi}(T_S)
=R.
\]
\end{proof}
 \section{Examples}
 We conclude with an explicit example illustrating
Theorem~\ref{thm:main} for a non-reductive algebraic group.
We verify the lattice and density assumptions and compute
the slow entropy from the chain structure of the adjoint action.

\begin{example}
Let $H:=\mathrm{SL}_n\ltimes\mathbb A^n$, $n\geq 2$, where $\mathbb A^n$ denotes the $n$-dimensional affine space,
with multiplication $(M,v)(N,w):=(MN,v+Mw)$.
We identify $H$ with an algebraic subgroup of
$\mathrm{SL}_{n+1}$ by
\[
(M,v)\longmapsto
\begin{pmatrix}
M&v\\
0&1
\end{pmatrix}.
\]
Consider the homogeneous space
\[
X:=
\frac{H(\mathbb R)\times H(\mathbb Q_p)}
{\Delta(H(\mathbb Z[1/p]))}.
\]

We first verify the assumptions of Theorem~\ref{thm:main}.
Since $\mathbb Z[1/p]$ is dense in $\mathbb Q_p$ and
$\mathrm{SL}_n(\mathbb Q_p)$ is generated by the elementary matrices
$I+aE_{ij}$, where $a\in\mathbb Q_p$ and $i\neq j$, it follows that
$\mathrm{SL}_n(\mathbb Z[1/p])$ is dense in $\mathrm{SL}_n(\mathbb Q_p)$.
Thus, $H(\mathbb Z[1/p])$ is dense in $H(\mathbb Q_p).$
Therefore, by Proposition~\ref{prop:uniform-conjugacy}, we obtain
a natural isomorphism
\[
\frac{H(\mathbb R)\times H(\mathbb Z_p)}
{\Delta(H(\mathbb Z))}
\cong
\frac{H(\mathbb R)\times H(\mathbb Q_p)}
{\Delta(H(\mathbb Z[1/p]))}.
\]
Since $H(\mathbb Z)$ is a lattice in $H(\mathbb R)$,
any finite volume fundamental domain $F$ for $H(\mathbb Z)$
gives a finite volume fundamental domain
$F\times H(\mathbb Z_p)$ for $\Delta(H(\mathbb Z))$.
Hence  $\Delta(H(\mathbb Z))$ is a lattice in
$H(\mathbb R)\times H(\mathbb Z_p)$.

Consequently, $\Delta(H(\mathbb Z[1/p]))$ is a lattice
in $H(\mathbb R)\times H(\mathbb Q_p)$.

Now let $A:=I_n+E_{12}$ and
$u:=(A,0)\in H(\mathbb Z)$.
Consider the translation induced by left multiplication

\[
T:X\longrightarrow X,\qquad T(x):=(u,u)x.
\] and let $\mu$ be the normalized Haar measure on $X$.
Since $A=\exp(E_{12})$, the infinitesimal generator
of $u$ is $(E_{12},0)$. The Lie algebra is
$\mathfrak h=\mathfrak{sl}_n(\mathbb R)\ltimes\mathbb R^n$,
and
\[
\operatorname{ad}(E_{12},0)(B,w)
=([E_{12},B],E_{12}w).
\]
Since
\[
[E_{12},E_{ij}]
=\delta_{2i}E_{1j}-\delta_{j1}E_{i2},
\]
the $\mathfrak{sl}_n$-part has chains
\[
E_{21}\mapsto E_{11}-E_{22}\mapsto-2E_{12}\mapsto0,
\]
and
\[
E_{2j}\mapsto E_{1j}\mapsto0,\qquad
E_{i1}\mapsto-E_{i2}\mapsto0,
\quad 3\le i,j\le n.
\]
The $\mathbb R^n$-part contributes the chain
$e_2\mapsto e_1\mapsto0$, with $e_3,\ldots,e_n$
annihilated by $E_{12}$. Thus, there is one chain of length $3$
and $2n-3$ chains of length $2$, while chains of length $1$
contribute nothing. Hence
\[
R=\binom{3}{2}+(2n-3)\binom{2}{2}=2n.
\]
Therefore, by Theorem~\ref{thm:main},
\[
h_{\mu,a_\chi}(T)
=
h_{\mathrm{top},a_\chi}(T)
=
R=2n.
\]
\end{example}

\bibliographystyle{plain} 
\bibliography{refs} 

\end{document}